\theoremstyle{plain}
\newtheorem{theorem}{Theorem}[section]
\newtheorem{lemma}[theorem]{Lemma}
\newtheorem{proposition}[theorem]{Proposition}
\newtheorem{corollary}[theorem]{Corollary}
\theoremstyle{definition}
\newtheorem{definition}[theorem]{Definition}
\newtheorem{question}[theorem]{Question}
\newtheorem{remark}[theorem]{Remark}
\def\Aut{\operatorname{Aut}}
\def\Lie{\operatorname{Lie}}
\def\alg{\operatorname{alg}}
\def\ZZ{{\mathbb Z}}
\def\TT{{\mathbb T}}
\def\UU{{\mathcal U}(X)}
\def\NN{{\mathbb N}}
\def\K{{\mathbb K}}
\def\G{{\mathbb G}}
\def\embed{\hookrightarrow}
\def\U{{\mathcal U}}
\def\SAut{\operatorname{\mathrm SAut}}
\newcommand\thankssymb[1]{\lowercase{\textsuperscript{\@alph{#1}}}}
\address{
  Kharkevich Institute for Information Transmission Problems\\
  19 Bolshoy Karetny per., 127994 Moscow, Russia
}
\address{
National Research University Higher School of Economics\\
 20 Myasnitskaya ulitsa, Moscow 101000, Russia 
}
\email{a@perep.ru}
\address{\noindent Institut f\"{u}r Mathematik, Friedrich-Schiller-Universit\"{a}t Jena,   Jena 07737, Germany}
\email{andriyregeta@gmail.com}
\DeclareMathOperator{\id}{id}
\begin{document}

\title[Automorphism groups  consisting of algebraic elements]{Automorphism groups of affine varieties consisting of algebraic elements}

\author{Alexander Perepechko\thankssymb{1}}
\thanks{\thankssymb{1}The research of the first author was
carried out at the HSE University at the expense of the Russian Science Foundation (project
no. 21-71-00062)}

\author{Andriy Regeta
}
\keywords{affine variety, automorphism group, algebraic element, ind-group} 
\subjclass{14R20 (Primary); 22E65 (Secondary)}

\maketitle

\begin{abstract} 
Given an affine algebraic variety $X$, we prove that if the neutral component $\mathrm{Aut}^\circ(X)$ of the automorphism group consists of algebraic elements,
then it is nested, i.e., is a direct limit of algebraic subgroups.
This  improves  our earlier result \cite{PR}.
To prove it, we obtain the following fact.
If a connected ind-group $G$ contains a closed connected nested ind-subgroup $H\subset G$,
and for any $g\in G$ some positive power of $g$ belongs to $H$, then $G=H.$
\end{abstract}

\section{Introduction}
 In this note we work over an
 algebraically closed uncountable field of  characteristic zero  $\K$.  We study the automorphism groups of affine varieties. 
It is well known that these groups can be larger than any algebraic group. For example, the automorphism group $\Aut(\mathbb{A}^n)$ of the affine $n$-space $\mathbb{A}^n$  contains a copy of
a polynomial ring in $n-1$ variables, hence it is infinite-dimensional for $n \ge 2$.

 In \cite{Sh66} Shafarevich introduced the notion of the infinite-dimensional algebraic group, which is currently called 
the \emph{ind-group} and showed that $\Aut(\mathbb{A}^n)$ has the structure of the ind-group. 
Later it was shown that $\Aut(X)$ has a natural structure of an ind-group for any affine variety $X$,  see \cite[Section 5]{FK} and also \cite[Section 2]{KPZ1}.

We call an element $g$ of the automorphism group $\Aut(X)$ \emph{algebraic} if there is an algebraic subgroup $G$ of the ind-group $\Aut(X)$ that contains $g$.  We also denote by  $\mathbb{G}_a$ the additive group of the field and by $\UU\subset \Aut(X)$ the (possibly trivial) subgroup generated by all the $\G_a$-actions. 
It is usually called the \emph{special automorphism group}   and is also denoted by $\SAut(X)$.

Recall that a closed subgroup $G \subset \Aut(X)$ is called \emph{nested} if it is a
countable increasing union of closed algebraic subgroups. It can be shown that this is equivalent to $G$ equal the union of algebraic subgroups.
In \cite{PR} we proved that for  the subgroup
 $\Aut_{\text{alg}}(X) \subset \Aut(X)$ 
generated by all connected algebraic subgroups the following conditions are equivalent:
 \begin{itemize}
\item $\UU$ is abelian;
\item all elements  of $\Aut_{\alg}(X)$ are algebraic;
\item the subgroup $\Aut_{\alg}(X) \subset \Aut(X)$ is a closed nested ind-subgroup; 
\item $\Aut_{\alg}(X) = \TT \ltimes \UU$, where $\TT$ is a maximal subtorus of $\Aut(X)$, and $\UU$ is closed in $\Aut(X)$. 
\end{itemize}

In this paper we prove that this
 result can be partially extended from $\Aut_{\alg}(X)$
 to the connected component $\Aut^\circ(X)$. More precisely, we have the following result which is proved in Section \ref{sectionneutralcomponent}.

\begin{theorem}\label{main}
Let $X$ be an affine variety. The following conditions are equivalent:
\begin{enumerate}[(1)]
\item all elements  of $\Aut^\circ(X)$ are algebraic;
\item the subgroup $\Aut^\circ(X) \subset \Aut(X)$ is a closed nested ind-subgroup;
\item $\Aut^\circ(X) = \TT \ltimes \UU$, where $\TT$ is a maximal subtorus of $\Aut(X)$, and  $\UU$ is abelian and consists of all unipotent elements of $\Aut(X)$.
\end{enumerate}
 \end{theorem}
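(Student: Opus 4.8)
The plan is to establish the cycle $(1)\Rightarrow(3)\Rightarrow(2)\Rightarrow(1)$, using as the two main inputs the equivalences for $\Aut_{\alg}(X)$ from \cite{PR} and the ind-group fact announced in the abstract: if a connected ind-group $G$ contains a closed connected nested ind-subgroup $H$ such that every element of $G$ has a positive power in $H$, then $G=H$. Throughout I will use that $\Aut_{\alg}(X)$, being generated by connected algebraic subgroups each passing through the identity, is a \emph{connected} ind-subgroup of $\Aut^\circ(X)$. The implication $(2)\Rightarrow(1)$ is immediate: a closed nested ind-subgroup is a countable union of algebraic subgroups, so each of its elements lies in an algebraic subgroup and is therefore algebraic.

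For $(1)\Rightarrow(3)$, which is the core, I first restrict hypothesis $(1)$ to the subgroup $\Aut_{\alg}(X)\subseteq\Aut^\circ(X)$: all of its elements are algebraic, so the equivalences of \cite{PR} apply and give that $\UU$ is abelian and that $\Aut_{\alg}(X)=\TT\ltimes\UU$ is a closed, connected, nested ind-subgroup. The essential new step is to upgrade this to $\Aut^\circ(X)=\Aut_{\alg}(X)$, which I do by verifying the hypotheses of the key ind-group lemma with $G=\Aut^\circ(X)$ and $H=\Aut_{\alg}(X)$. Here $G$ is connected by definition, $H$ is closed, connected and nested by the previous step, and it remains to check that every $g\in\Aut^\circ(X)$ has a positive power in $\Aut_{\alg}(X)$: since $g$ is algebraic by $(1)$, it lies in some algebraic subgroup $G_0\subseteq\Aut(X)$ whose identity component $G_0^\circ$ is a connected algebraic subgroup, so $G_0^\circ\subseteq\Aut_{\alg}(X)$, and as $G_0/G_0^\circ$ is finite of some order $m$ we obtain $g^m\in G_0^\circ\subseteq\Aut_{\alg}(X)$. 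The lemma then yields $\Aut^\circ(X)=\Aut_{\alg}(X)=\TT\ltimes\UU$. To complete $(3)$ I identify $\UU$ with the set of all unipotent elements of $\Aut(X)$: in characteristic zero any unipotent element generates a one-parameter subgroup isomorphic to $\G_a$ contained in $\UU$, while conversely every element of $\UU$ is a product of pairwise commuting elements of $\G_a$-subgroups, hence lies in the image of some $\G_a^k$, a commutative unipotent group, and is therefore unipotent.

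Finally, for $(3)\Rightarrow(2)$ I again invoke \cite{PR}: since $(3)$ asserts that $\UU$ is abelian, that reference gives $\Aut_{\alg}(X)=\TT\ltimes\UU$ as a closed nested ind-subgroup, and $(3)$ identifies this with $\Aut^\circ(X)$, so $\Aut^\circ(X)$ is closed and nested. Given the two cited inputs, the remaining difficulty inside this argument is mild and concentrated in the power computation above. The genuinely hard part of the whole enterprise is the key ind-group lemma itself: passing from ``a power of every $g$ lies in $H$'' to ``$g$ itself lies in $H$'' requires controlling how the one-parameter subgroup through $g$ meets the nested filtration of $H$, and this is where the connectedness and closedness of $H$, together with the structure of the relevant algebraic subgroups, must be exploited.
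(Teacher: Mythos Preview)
Your proof is correct and follows essentially the same route as the paper: reduce $(1)\Rightarrow(3)$ to the ind-group lemma (Theorem~\ref{th:main-ind}) applied to $G=\Aut^\circ(X)$ and $H=\Aut_{\alg}(X)=\TT\ltimes\UU$, after invoking \cite{PR} to know $H$ is closed, connected and nested; your power-in-$H$ verification is exactly the content of the paper's Lemma~\ref{lm:gd} (the paper takes $G_0$ to be the Zariski closure of $\langle g\rangle$, but your choice of an arbitrary algebraic $G_0\ni g$ works equally well). The implications $(3)\Rightarrow(2)\Rightarrow(1)$ are declared ``obvious'' in the paper, and your slightly more explicit justifications for them are fine.
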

 
 In \cite{KPZ1} this theorem is proved for algebraic surfaces with a nontrivial group $\UU$.
 
 The key observation in our proof is as follows.
 Under condition (1) for any element of $\Aut^\circ(X)$ some positive power of it belongs to $\TT \ltimes \UU$, see Lemma~\ref{lm:gd}. 
 In Section~\ref{sectionfiniteorderelements} we prove that 
 a connected
 ind-group $G$ coincides with its closed connected nested ind-subgroup $H$ if for any element of $G$ some positive power of it lies in $H$, see Theorem~\ref{th:main-ind}.
 
 In  Section \ref{sectionautomorphismsofrigidvarieties} we also state some observations about the group of automorphisms of a rigid affine variety, i.e., an affine variety that admits no $\mathbb{G}_a$-actions.
 See also \cite{AG} for the case when a rigid affine variety $X$ admits only constant $\TT$-invariant regular functions.

 \subsubsection*{Acknowledgement.} The authors are grateful to Mikhail Zaidenberg and to the referee for useful remarks and suggestions.
\section{Preliminaries}\label{preliminaries}

\subsection{Ind-groups}

The notion of an ind-group goes back to Shafarevich who called these objects infinite dimensional groups (see \cite{Sh66}). 
We refer to \cite{FK}   for basic notions in this context.

\begin{definition}
An ind-variety $V$ is a set together with an ascending filtration
$V_0 \embed V_1 \embed V_2 \embed \ldots \subset V$  such that the following holds:
\begin{enumerate}[(1)]
\item $V = \bigcup_{k \in \NN} V_k$;
\item each $V_k$ is an algebraic variety;
\item for all $k \in \NN$ the embedding $V_k \embed V_{k+1}$ is closed in the Zariski topology.
\end{enumerate}
\end{definition}

An ind-variety $V$ is called \emph{affine} if all $V_i$ are affine.
An ind-variety $V$ has a natural \emph{topology}: a subset $S \subset V$ is called open (resp. closed) if $S_k := S \cap  V_k \subset V_k$ is open (resp. closed) for all $k \in \mathbb{N}$.  A  closed subset $S \subset V$ has a natural structure of an ind-variety and is called an ind-subvariety.

The product of  ind-varieties $X=\bigcup_i X_i$ and $Y=\bigcup_i Y_i$ is defined as $\bigcup_i (X_i\times Y_i)$. 
A \emph{morphism} between ind-varieties $V = \bigcup_k V_k$ and $W = \bigcup_m W_m$ is a map $\phi: V \to W$ such that for every $k \in \mathbb{N}$ there is an $m \in \mathbb{N}$ such that 
$\phi(V_k) \subset W_m$ and that the induced map $V_k \to W_m$ is a morphism of algebraic varieties. 
 This allows us to give the following definition.

\begin{definition}
An ind-variety $G$ is said to be an \emph{ind-group} if the underlying set $G$ is a group such that the map $G \times G \to G$,  $(g,h) \mapsto gh^{-1}$, is a morphism.
\end{definition}

A \emph{closed subgroup}  $H$ of $G$ is a subgroup that is also a closed subset. Then $H$ is again an ind-group with respect to the induced ind-variety structure.
 A closed subgroup $H$ of an ind-group $G = \bigcup_i G_i$ is called an \emph{algebraic subgroup} if $H$ is contained in some $G_i$.  
 
The next result can be found in   \cite[Section 5]{FK}.
\begin{proposition}\label{ind-group}
Let $X$ be an affine variety. Then $\Aut(X)$ has the structure of an affine ind-group such that  a regular action of an algebraic group $G$ on $X$ induces a homomorphism of ind-groups $G \to \Aut(X)$.
\end{proposition}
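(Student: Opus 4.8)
The plan is to realize $\Aut(X)$ inside an ambient ind-monoid of endomorphisms and to control degrees so that every structure map becomes a morphism. Write $A=\K[X]$ for the coordinate ring and fix generators $x_1,\dots,x_n$, giving a closed embedding $X\embed\mathbb{A}^n$ with $A=\K[x_1,\dots,x_n]/I$. An endomorphism $\phi$ of $X$ is the same datum as a $\K$-algebra endomorphism $\phi^*$ of $A$, and such a map is determined by the images $\phi^*(x_1),\dots,\phi^*(x_n)\in A$, each lifted to a polynomial in $\K[x_1,\dots,x_n]$. I would then filter by degree: let $\End(X)_{\le d}$ consist of those $\phi$ for which every $\phi^*(x_i)$ admits a representative of degree $\le d$.

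For each $d$ the set $\End(X)_{\le d}$ is an affine algebraic variety. Indeed, choosing a representative of degree $\le d$ for each image amounts to choosing a finite vector of coefficients, and the requirement that the assignment $x_i\mapsto\phi^*(x_i)$ descend to a well-defined algebra map, i.e.\ that it kill the ideal $I$, is a system of polynomial equations on those coefficients. Hence $\End(X)_{\le d}$ is a closed subset of an affine space, and $\End(X)=\bigcup_{d\in\NN}\End(X)_{\le d}$ is an affine ind-variety. Composition multiplies degrees: if $\phi\in\End(X)_{\le d}$ and $\psi\in\End(X)_{\le e}$, then $(\psi\circ\phi)^*(x_i)=\phi^*(\psi^*(x_i))$ has degree $\le de$, so multiplication restricts to a morphism $\End(X)_{\le e}\times\End(X)_{\le d}\to\End(X)_{\le de}$. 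Thus $\End(X)$ is an ind-monoid.

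The subtle point is inversion, since a priori the degree of $\phi^{-1}$ is not controlled by that of $\phi$. I would circumvent this by embedding $\Aut(X)\embed\End(X)\times\End(X)$ via $\phi\mapsto(\phi,\phi^{-1})$ and defining $\Aut(X)_{\le d}$ to be the set of pairs $(\phi,\psi)$ with $\phi,\psi\in\End(X)_{\le d}$ and $\phi\circ\psi=\psi\circ\phi=\id$. The relations $\phi\circ\psi=\id$ and $\psi\circ\phi=\id$ are closed conditions (comparing coefficients after composition), so $\Aut(X)_{\le d}$ is closed in $\End(X)_{\le d}\times\End(X)_{\le d}$ and therefore algebraic, and $\Aut(X)=\bigcup_d\Aut(X)_{\le d}$. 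In these coordinates inversion is simply the coordinate swap $(\phi,\psi)\mapsto(\psi,\phi)$, hence a morphism, while multiplication is a morphism because it is so on the $\End$-factors and the degree bound above keeps both components bounded. This equips $\Aut(X)$ with the structure of an affine ind-group; one then checks that a different choice of generators yields the same filtered union up to admissible reindexing, so the structure is intrinsic.

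For the last assertion, a regular action $G\times X\to X$ of an algebraic group $G$ is encoded by a single comorphism $A\to\K[G]\otimes A$, under which each generator $x_i$ maps to a fixed element of $\K[G]\otimes A$. For $g\in G$ let $\phi_g\in\Aut(X)$ denote the corresponding automorphism. Evaluating the comorphism at $g$ expresses $\phi_g^*(x_i)$ as a polynomial in the $x_j$ whose coefficients are the values at $g$ of fixed elements of $\K[G]$; in particular its degree is bounded uniformly in $g$, and the same holds for $\phi_{g^{-1}}=\phi_g^{-1}$. Hence the induced map $G\to\Aut(X)$ has image in a single $\Aut(X)_{\le d}$, and since the coefficients of $\phi_g^*(x_i)$ depend regularly on $g$, the map $G\to\Aut(X)_{\le d}$ is a morphism of algebraic varieties; therefore $G\to\Aut(X)$ is a homomorphism of ind-groups. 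The main obstacle throughout is the inversion map, and the device of recording an automorphism together with its inverse is exactly what makes both inversion and multiplication morphisms with respect to a filtration by algebraic subsets.
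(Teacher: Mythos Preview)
The paper does not actually prove this proposition; it simply cites \cite[Section~5]{FK}. Your argument is correct and is precisely the standard construction carried out there: filter $\End(X)$ by degree with respect to a fixed closed embedding $X\hookrightarrow\mathbb{A}^n$, realize $\Aut(X)$ as the closed ind-subvariety of $\End(X)\times\End(X)$ consisting of mutually inverse pairs so that inversion becomes the swap, and use the comorphism $A\to\K[G]\otimes A$ to obtain a uniform degree bound for the orbit maps of a regular $G$-action. One small expository point: when you write that ``choosing a representative of degree $\le d$ amounts to choosing a finite vector of coefficients'', it is cleaner to work directly in the finite-dimensional subspace $A_{\le d}\subset A$ (images of degree-$\le d$ polynomials) rather than with lifts to $\K[x_1,\dots,x_n]$, so that $\End(X)_{\le d}\subset (A_{\le d})^n$ is genuinely a closed subvariety without redundancy; this is how \cite{FK} sets it up and avoids any ambiguity about the variety structure.
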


Two ind-structures  $V=\bigcup_i V_i$ and $V=\bigcup_i V_i^\prime$
are called \emph{equivalent}, if the identity map $\bigcup_i V_i\to \bigcup_i V_i^\prime$ is an isomorphism of ind-varieties.
One also calls $\bigcup_i V_i^\prime$  an \emph{admissible} filtration of the ind-variety $V=\bigcup_i V_i$.

\begin{definition}[{\cite[Definition 1.9.4]{FK}}]
A point $p$ in an ind-variety $V$ is called \emph{geometrically smooth}, if there exists an admissible filtration $V=\bigcup_i V_i$ such that $p$ is a smooth point of $V_i$ for each $i$.
\end{definition}

An element $g \in \Aut(X)$ is called \emph{algebraic} if there is an algebraic subgroup $G \subset \Aut(X)$ such that $g \in G$. 
 An ind-group $G=\bigcup_i G_i$ is called \emph{nested}  if it admits an admissible
filtration $G=\bigcup_i G_i$, where all $G_i$ are algebraic subgroups.
 
\begin{remark}
    If $H$ is a nested ind-group, then all its points are geometrically smooth.
    Indeed, since algebraic groups are smooth, any filtration by algebraic subgroups fits.

However, to our knowledge, this property is not proven for arbitrary ind-groups.
For example, a stronger property of being \emph{strongly smooth} does not hold for the ind-group $\Aut(\mathbb{A}^2)$.
More generally, this group does not admit a filtration by normal varieties, see \cite[Corollary 14.1.2]{FK}.
\end{remark}

\subsection{Lie algebras of ind-groups}\label{Lie}
For an ind-variety $V = \bigcup_{k \in \mathbb{N}} V_k$ we can define the tangent space in $x \in V$ in the obvious way: we  have  $x \in V_k$ for $k \ge k_0$, and
$T_x V_k \subset T_x V_{k+1}$ for $k \ge k_0$, and then we define
$$
  T_x V := \bigcup_{k \ge k_0} T_x V_k,
$$
which is a vector space of at most countable dimension.

For an  ind-group $G$, the tangent space $T_e G$ has a natural structure of a Lie algebra which is  denoted by $\Lie G$, see \cite[Section 4]{Ku} and \cite[Section 2]{FK} for details. 

\subsection{$\G_a$-actions}
 Given an affine variety $X$, we denote by
 $\Aut_{\mathrm{alg}}(X) \subset \Aut(X)$  the subgroup
generated by all connected algebraic subgroups
of the automorphism group $\Aut(X)$. 

An element $u\in\Aut(X)$ is called \emph{unipotent} if $u$ either is an identity or belongs to an algebraic subgroup of  $\Aut(X)$ isomorphic to  $\mathbb{G}_a$. 
 We denote 
 the automorphism subgroup of $\Aut(X)$ generated  by all the unipotent elements by $\UU$.

The groups $\Aut_{\mathrm{alg}(X)}$ and $\UU$ are also denoted by $\mathrm{AAut}(X)$ and $\mathrm{SAut}(X)$ respectively.

\section{When all elements modulo a nested subgroup are of finite order}\label{sectionfiniteorderelements}
The aim of this section is to prove Theorem  \ref{th:main-ind}.

\begin{theorem}\label{th:main-ind}
 Let $G$ be a connected ind-group and $H\subset G$ be a closed connected nested ind-subgroup.
Assume that for any $g\in G$ there exists a positive integer $d$ such that $g^d\in H$. 
Then $G=H.$
\end{theorem}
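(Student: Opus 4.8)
The plan is to combine a uniformization step, made possible by the uncountability of $\K$, with an infinitesimal argument at the identity, and then to integrate back to a statement about the whole group. Write $G=\bigcup_k G_k$ for the given filtration by algebraic varieties and $H=\bigcup_j H_j$ for one by algebraic subgroups, using that $H$ is nested. For a positive integer $d$ let $\pi_d\colon G\to G$, $\pi_d(g)=g^d$; this is a morphism of ind-groups since the multiplication is. First I would fix $k$ and consider the subsets $P_d:=\{g\in G_k: g^d\in H\}$ of the algebraic variety $G_k$. As $\pi_d$ maps $G_k$ into some $G_m$ and $H\cap G_m$ is closed, each $P_d$ is closed; moreover $P_d\subseteq P_{d'}$ whenever $d\mid d'$, since $g^d\in H$ forces $g^{d'}=(g^d)^{d'/d}\in H$. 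By hypothesis $G_k=\bigcup_{d\ge 1}P_d=\bigcup_{n\ge 1}P_{n!}$, an increasing countable union of closed subsets. Since $\K$ is uncountable, no irreducible variety is a countable union of proper closed subvarieties; applying this to each of the finitely many irreducible components of $G_k$ yields a single exponent $d_k$ with $G_k\subseteq P_{d_k}$, that is, $\pi_{d_k}(G_k)\subseteq H$.

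Next I would extract an equality of Lie algebras. Because $G_k$ is of finite type and $H$ is closed, the restriction $\pi_{d_k}|_{G_k}$ has image contained in some algebraic subgroup $H_j\subseteq H$, so its differential at $e$ fits into $d(\pi_{d_k})_e\colon T_eG_k\to T_eH_j\subseteq T_eG$. A standard computation with the product rule for the multiplication morphism gives $d(\pi_d)_e=d\cdot\id$ on $T_eG$ for every $d$. As $\K$ has characteristic zero, $d_k\cdot\id$ is invertible, hence it carries $T_eG_k$ isomorphically onto itself; therefore $T_eG_k\subseteq T_eH_j\subseteq\Lie H$. Letting $k$ vary and using $\Lie G=\bigcup_k T_eG_k$, I obtain $\Lie G\subseteq\Lie H$, and so $\Lie H=\Lie G$.

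It then remains to upgrade $\Lie H=\Lie G$ to $H=G$, and this is where I expect the main difficulty to lie. The target is to show that $H$ is open in $G$: since $H$ is already closed and nonempty, openness together with connectedness of $G$ forces $H=G$, finishing the proof. Translating the infinitesimal equality by left multiplication, which is an automorphism of $G$ preserving $H$, gives $T_hH=T_hG$ for all $h\in H$, so $H$ and $G$ have the same tangent spaces along $H$. The obstacle is that $G$ need not be geometrically smooth, so one cannot conclude directly from equality of tangent spaces that $H$ contains a neighbourhood of $e$. Here I would use the nestedness of $H$ decisively: exploiting that $H_j$ is smooth and that $\pi_{d_k}\colon G_k\to H_j$ has the invertible differential $d_k\cdot\id$ at $e$, I would try to show that the image $\pi_{d_k}(G_k)\subseteq H$ covers a neighbourhood of $e$ in $G$, thereby producing an open subgroup contained in $H$. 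Making this local surjectivity rigorous in the ind-setting, and in particular controlling the interplay between the filtrations $\{G_k\}$ and $\{H_j\}$, is the technical heart of the argument. This is also the only place where both the connectedness of $G$ and the geometric smoothness coming from nestedness of $H$ are genuinely needed; it is consistent with the fact that the neutral component of a connected ind-group may be strictly larger than the subgroup generated by its algebraic subgroups, so that a mere generation-by-curves argument cannot suffice.
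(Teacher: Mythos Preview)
Your first two steps---the uniformization of the exponent via uncountability of $\K$ (your $P_d$'s) and the differential computation $d(\pi_d)_e=d\cdot\id$---match the paper's Lemma~3.7 and Corollary~3.4, and your conclusion $\Lie H=\Lie G$ is correct. The divergence, and the genuine gap, is in the final step.

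You try to upgrade $\Lie H=\Lie G$ to $H=G$ by showing that $H$ is open in $G$, and you correctly flag this as the hard part. But you do not complete it, and the suggested route---that $\pi_{d_k}(G_k)\subseteq H$ ``covers a neighbourhood of $e$ in $G$''---does not make sense as stated: the image lies in $H$ by construction, and smoothness of $H_j$ together with invertibility of the differential gives you at best local information inside $H_j$, not an open subset of the possibly non-smooth ambient $G$. The ``integration'' problem you describe is real, and the paper does not solve it either.

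Instead, the paper bypasses it by a sharper use of the same differential computation, keeping everything finite-dimensional. Rather than passing to the global equality $\Lie G=\Lie H$, it looks at the closed ind-subvariety $X_{d,k}:=\pi_d^{-1}(H_k)\subset G$. The restriction $\pi_d|_{X_{d,k}}\colon X_{d,k}\to H_k$ has differential $d\cdot\id$ at $e$, which is injective, so $\dim T_e X_{d,k}\le\dim T_e H_k=\dim H_k$; since $H_k\subset X_{d,k}$, in fact $T_eX_{d,k}=T_eH_k$. As $H_k$ is smooth and irreducible, any irreducible subvariety of $X_{d,k}$ through $e$ strictly containing $H_k$ would force $\dim T_eX_{d,k}>\dim H_k$; hence $H_k$ is the unique irreducible component of $X_{d,k}$ through $e$. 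Now one uses connectedness of $G$ to take the filtration $\{G_i\}$ by \emph{irreducible} varieties: each $G_i$ sits inside some $X_{d,k}$ (this is a slight strengthening of your uniformization step, Lemma~3.8 in the paper), is irreducible, and contains $e$, so $G_i\subset H_k\subset H$. Thus $G=H$.

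The point where you lose information is exactly when you take the union $\Lie G=\bigcup_k T_eG_k$: what does the work is the \emph{finite}-dimensional equality $T_eX_{d,k}=T_eH_k$, which pins down an irreducible component of a concrete closed subset, not the infinite-dimensional Lie-algebra equality.
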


Since $G$ and $H$ are connected ind-groups, $G$ and $H$ are irreducible curve-connected ind-groups (see \cite[Proposition 2.2.1 and Remark 2.2.3]{FK}). 
By \cite[Proposition 1.6.3]{FK} we can assume that $G$ is filtered by irreducible varieties $G_i$.
Since $H$ is nested, it is filtered by algebraic subgroups $H_i$ of $G$. We may and will assume $H_i$ to be connected, because $H$ is connected.

Consider the multiplication map \[\mu_d\colon G^d = \underbrace{G \times\dots\times G}_\text{$d$ times}\to G, \; (g_1,\ldots,g_d)\mapsto g_1\cdots g_d.
\]
  and its differential 
\[
\mathrm{d}\mu_d\colon (\Lie G)^d = \underbrace{\Lie G \times\dots\times \Lie G}_\text{$d$ times} \to \Lie G.
\]
We have the following statement.
\begin{lemma}\label{lm:lie-d-sum}
Given $(x_1,\ldots,x_d) \in (\Lie G)^d$, the following holds:
\[
\mathrm{d}\mu_d((x_1,\ldots,x_d))= x_1+\cdots+x_d.
\]
\end{lemma}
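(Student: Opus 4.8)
The plan is to reduce the claim to the classical fact that the differential of group multiplication, restricted to each factor, is the identity. Concretely, I would observe that the multiplication map $\mu_d$ factors through the ordinary group law: writing $m\colon G\times G\to G$ for the product $(g,h)\mapsto gh$, we have $\mu_d = m\circ(m\circ\cdots)$, an iterated composition. Since $G$ is an ind-group, $m$ is a morphism of ind-varieties, and so $\mu_d$ is a morphism; its differential at the identity point $(e,\ldots,e)$ is therefore well-defined as a linear map $(\Lie G)^d\to\Lie G$, and it is computed by composing the differentials of the iterated multiplications via the chain rule.

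The key computation is the two-factor case, $\mathrm{d}m_{(e,e)}\colon \Lie G\times\Lie G\to\Lie G$. Here I would use the standard argument: restricting $m$ to $G\times\{e\}$ gives the identity map $g\mapsto g$, so $\mathrm{d}m_{(e,e)}(x,0)=x$, and symmetrically $\mathrm{d}m_{(e,e)}(0,y)=y$. By linearity of the differential, $\mathrm{d}m_{(e,e)}(x,y)=x+y$. This is exactly the statement for $d=2$. To make this rigorous in the ind-setting, I would restrict attention to a single algebraic layer: any tangent vector $x_i\in\Lie G$ lies in $T_eG_{k}$ for some $k$, and the restriction of $m$ to a suitable product $G_k\times G_k\to G_m$ is a morphism of ordinary varieties, so the classical computation of the differential of multiplication applies verbatim on that layer. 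Since $\Lie G=\bigcup_k T_eG_k$, the formula holds on all of $(\Lie G)^d$.

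From the $d=2$ case the general statement follows by induction on $d$. Writing $\mu_d(g_1,\ldots,g_d)=m(\mu_{d-1}(g_1,\ldots,g_{d-1}),g_d)$ and applying the chain rule at the identity, I get
\[
\mathrm{d}\mu_d(x_1,\ldots,x_d)=\mathrm{d}m_{(e,e)}\bigl(\mathrm{d}\mu_{d-1}(x_1,\ldots,x_{d-1}),\,x_d\bigr)=(x_1+\cdots+x_{d-1})+x_d,
\]
using the inductive hypothesis for $\mu_{d-1}$ and the base case for $m$. This closes the induction and yields $\mathrm{d}\mu_d(x_1,\ldots,x_d)=x_1+\cdots+x_d$.

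I do not anticipate a deep obstacle here; the content is entirely formal once the differential of multiplication is identified. The only point demanding care is bookkeeping in the ind-category: one must ensure that $\mu_d$ and its iterated factorizations are genuine morphisms of ind-varieties and that the differential and chain rule behave as in the finite-dimensional case. Since $\Lie G=T_eG=\bigcup_k T_eG_k$ is a direct limit of finite-dimensional tangent spaces and every finite collection of vectors $x_1,\ldots,x_d$ lands in a common algebraic layer $G_k$, all differentiation takes place within an ordinary smooth algebraic variety, so no genuinely infinite-dimensional difficulty arises. The mild subtlety is verifying geometric smoothness of the relevant points so that tangent spaces and differentials are well-behaved, but this is automatic on the algebraic layers $G_k$.
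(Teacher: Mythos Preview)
Your proof is correct and follows essentially the same idea as the paper: both arguments rest on the observation that restricting the multiplication map to a single factor (with the remaining entries at the identity) yields the identity map on $G$, whence its differential is the identity on $\Lie G$, and then linearity gives the sum. The only cosmetic difference is that the paper applies this observation directly to all $d$ factors of $\mu_d$ at once via the section maps $s_i\colon g\mapsto(\id,\ldots,g,\ldots,\id)$, whereas you prove the $d=2$ case and then induct using the chain rule; also, your closing remark about geometric smoothness of the layers $G_k$ is unnecessary (and not generally true), since Zariski tangent spaces and differentials of morphisms are well-defined regardless of smoothness.
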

\begin{proof} 
By linearity, 
\begin{equation}\label{eq:addition}
\mathrm{d}\mu_d((x_1,\ldots,x_d))=
\sum_i \mathrm{d}\mu_d((0,\ldots,0,x_i,0,\ldots,0)).
\end{equation}
We claim that $\mathrm{d}\mu_d((0,\ldots,0,x_i,0,\ldots,0)) = x_i$. Indeed,  let us denote
\[
s_i\colon G \to \underbrace{G \times\dots\times G}_\text{$d$ times}, \; g\mapsto (id,\dots, \underbrace{g}_\text{$i$-th position},\dots,id).
\]
The composition $\mu_d \circ s_i$ is the trivial automorphism of $G$. Hence,
\begin{equation}\label{eq:composition}
\mathrm{d}(\mu_d \circ s_i)\colon
\Lie G \overset{\mathrm{d}s_i}{\to} \underbrace{\Lie G \oplus\dots\oplus \Lie G}_\text{$d$ times} \overset{\mathrm{d}\mu_d}{\to} \Lie G
\end{equation}
is the identity map, where the first map in \eqref{eq:composition} is given by the embedding into the $i$-th coordinate.
Therefore,
we conclude that $\mathrm{d}\mu_d((0,\ldots,0,x_i,0,\ldots,0)) = x_i$. Now, from \eqref{eq:addition} it follows that \[\mathrm{d}\mu_d((x_1,\ldots,x_d))=\sum_i x_i.
\]
\end{proof}

\begin{definition}
We denote $\phi_d\colon G\to G,\ g\mapsto g^d.$
It is an endomorphism of an ind-variety.
\end{definition}

\begin{corollary}\label{cor:x-dx}
The differential
$\mathrm{d}\phi_d\colon \Lie G\to\Lie G$ satisfies
\[\mathrm{d}\phi_d(x)=d\cdot x\]
for any $x\in\Lie G.$
\end{corollary}
\begin{proof}
Consider the embedding 
\[
s\colon G \to \underbrace{G \times\dots\times G}_\text{$d$ times} \; ;\quad g \mapsto (g,\dots,g).
\]
Its differential is the embedding
\[
\mathrm{d}s\colon \Lie G \to \underbrace{\Lie G \oplus\dots\oplus \Lie G}_\text{$d$ times}; \; \; x \mapsto (x,\dots,x).
\]
Since $\phi_d = \mu_d \circ s$, by Lemma~\ref{lm:lie-d-sum}
\begin{equation}\label{eq:x-dx}
\mathrm{d}\phi_d(x)=\mathrm{d}\mu_d((x,\ldots,x))=d\cdot x.
\end{equation}
\end{proof}

\begin{definition}
For each $d,k\in\NN$ we denote
\[
X_{d,k} = \phi_d^{-1}(H_k)=\{g \in G\mid g^d\in H_k\}\subset G.
\]
\end{definition}

Note that $X_{d,k}$ is not necessarily algebraic.

\begin{definition}
Given a morphism of ind-varieties $\psi\colon X\to Y$ and admissible filtrations $X=\bigcup_i X_i$, $Y=\bigcup_j Y_j$,
we denote by $\eta_\psi\colon \NN\to\NN$ the map such that $Y_{\eta_\psi(i)}$ is the minimal filtration element containing $\psi(X_i)$.
We call $\eta_\psi$ the \emph{index map} of $\psi$. 
We do not specify the filtrations if their choice is clear.
\end{definition}

We will shortly denote $\eta_d=\eta_{\phi_d}$.
In the following lemma we use the assumption that the field $\K$ is uncountable. 

\begin{lemma}\label{lm:Gk-d}
For any $k\in\NN$, there exists $d\in \NN$ such that $\phi_d(G_k)\subset H$.
\end{lemma}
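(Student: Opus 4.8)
The plan is to use the hypothesis to cover $G$ by the sets $X_{d,k'}$, restrict this cover to the irreducible variety $G_k$, and then exploit the uncountability of $\K$ to force one member of the cover to swallow all of $G_k$. The upshot will be a single exponent $d$ working uniformly on $G_k$.

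First I would translate the hypothesis into a covering statement. For any $g\in G$ there is a $d$ with $g^d\in H=\bigcup_{k'}H_{k'}$, so $g^d\in H_{k'}$ for some $k'$, i.e. $g\in X_{d,k'}$. Hence $G=\bigcup_{d,k'\in\NN}X_{d,k'}$, and intersecting with $G_k$ gives $G_k=\bigcup_{d,k'}\bigl(G_k\cap X_{d,k'}\bigr)$, a union indexed by the countable set $\NN\times\NN$. Next I would verify that each piece $G_k\cap X_{d,k'}$ is Zariski closed in $G_k$: since $\phi_d$ is a morphism of ind-varieties, it sends $G_k$ into $G_{\eta_d(k)}$ by a morphism of algebraic varieties; the subgroup $H_{k'}$, being closed in $G$, meets $G_{\eta_d(k)}$ in a closed subset, and $G_k\cap X_{d,k'}$ is exactly the preimage of that closed subset under $\phi_d|_{G_k}$, hence closed in $G_k$.

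The decisive step, and the one where the uncountability of $\K$ enters, is the standard fact that an irreducible variety over an uncountable algebraically closed field cannot be written as a countable union of proper closed subvarieties. Applying this to the irreducible $G_k$ together with the countable closed cover above, I conclude that some $G_k\cap X_{d,k'}$ equals $G_k$, i.e. $G_k\subset X_{d,k'}$. By the definition of $X_{d,k'}$ this says precisely that $\phi_d(G_k)\subset H_{k'}\subset H$, which is the assertion of the lemma.

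I expect the only genuinely delicate points to be the closedness verification, where one must make sure the index map $\eta_d$ is well defined and that $\phi_d|_{G_k}$ really is a morphism into the algebraic variety $G_{\eta_d(k)}$, and the invocation of the uncountability statement. The latter is a routine Baire-type argument, but its hypothesis, the irreducibility of the $G_k$, must be in force; this is supplied by the reductions recorded just after the statement of Theorem~\ref{th:main-ind}, where $G$ was arranged to be filtered by irreducible varieties $G_i$.
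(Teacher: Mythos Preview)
Your proof is correct and follows essentially the same strategy as the paper: write $G_k$ as a countable union of closed subsets coming from the hypothesis, then invoke the uncountability of $\K$ (the paper cites \cite[Lemma~1.3.1]{FK}) to force one of them to equal $G_k$. The only cosmetic difference is that the paper indexes by $d$ alone, setting $M_d=G_k\cap\phi_d^{-1}(H\cap G_{\eta_d(k)})$, whereas you index by pairs $(d,k')$ via $G_k\cap X_{d,k'}$; since both families are countable and closed, the argument goes through identically.
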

\begin{proof}
Denote 
\[M_d=G_k\cap\phi_d^{-1}(H\cap G_{\eta_d(k)}),\]
which is closed in $G_k$ for each $d$.
Since the irreducible variety $G_k$ is the countable union of the closed subvarieties $M_d$, $d \ge 1$, there exists $M_d$ equal to
$G_k$, see \cite[Lemma 1.3.1]{FK}.
The assertion follows.
\end{proof}

\begin{lemma}\label{lm:A-Xdk}
\begin{enumerate}
    \item The subset $X_{d,k}$ is closed in $G$ for any $d,k\in\ZZ_{>0}$.
    \item\label{X_dkG_i} For any $X_{d,k}$ and $G_i$ there exists $X_{d',k'}$ containing  both $X_{d,k}$ and $G_i$;
    \item\label{X_dkG_i-1} There exists a sequence $\{X_{d_i,k_i}\mid i\in\NN\}$ such that $X_{d_i,k_i}\supset X_{d_{i-1},k_{i-1}}, G_i$.
\end{enumerate}
\end{lemma}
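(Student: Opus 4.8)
The plan is to establish the three assertions of Lemma~\ref{lm:A-Xdk} in order, leveraging the machinery already built. For part~(1), the map $\phi_d\colon G\to G$ is a morphism of ind-varieties, and $H_k$ is a closed algebraic subgroup of $G$, hence closed in $G$. Thus $X_{d,k}=\phi_d^{-1}(H_k)$ is closed as the preimage of a closed subset under a morphism, using the definition of the ind-topology (a subset is closed iff its intersection with each filtration piece is closed, and $\phi_d$ restricts to a morphism on each $G_i$).

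For part~(2), the key point is that the family $\{X_{d,k}\}$ is \emph{directed} under inclusion in a way that also swallows each $G_i$. First I would observe that the sets are monotone in each index: since $H_k\subset H_{k'}$ for $k\le k'$, we have $X_{d,k}\subset X_{d,k'}$; and since $g^d\in H$ implies $g^{dm}\in H$ (as $H$ is a subgroup), one gets $X_{d,k}\subset X_{dm,k''}$ for a suitable $k''$ absorbing the larger power. The genuinely new input is that each $G_i$ is contained in some $X_{d',k'}$: by Lemma~\ref{lm:Gk-d} there exists $d'$ with $\phi_{d'}(G_i)\subset H$, and since $G_i$ is an algebraic (hence quasi-compact) subvariety, its image $\phi_{d'}(G_i)$ meets only finitely many $H_k$, so it lies inside some single $H_{k'}$; this gives $G_i\subset X_{d',k'}$. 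Combining a common multiple of the power indices and a common upper bound of the level indices via monotonicity yields a single $X_{d',k'}$ containing both $X_{d,k}$ and $G_i$.

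For part~(3), I would build the sequence inductively. Set $X_{d_0,k_0}$ arbitrarily (say $X_{1,0}$), and given $X_{d_{i-1},k_{i-1}}$, apply part~(2) to the pair $X_{d_{i-1},k_{i-1}}$ and $G_i$ to produce $X_{d_i,k_i}$ containing both. By construction $X_{d_i,k_i}\supset X_{d_{i-1},k_{i-1}}$ and $X_{d_i,k_i}\supset G_i$, which is exactly the required nesting.

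I expect the main obstacle to be part~(2), specifically verifying that two sets $X_{d,k}$ and $X_{d',k'}$ can be simultaneously dominated by a single member of the family. The subtlety is that $X_{d,k}$ records the condition $g^d\in H_k$, and enlarging $d$ to a common multiple requires that $g^d\in H_k$ still implies $g^{d'}\in H_{k'}$ for the new larger level; this forces a careful choice of $k'$ large enough to contain all the relevant powers of elements of $H_k$ inside $H$. Because $H_k$ is an algebraic subgroup and the power map $\phi_m$ sends it into $H$ (being a homomorphism into $H$ followed by the inclusion), its image again lies in a single $H_{k'}$ by quasi-compactness, so the index $k'$ exists. Assembling these finite-level containments into one dominating $X_{d',k'}$ is where the bookkeeping of index maps $\eta_d$ and the uncountability hypothesis, already used in Lemma~\ref{lm:Gk-d}, do the real work.
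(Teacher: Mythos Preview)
Your proposal is correct and follows essentially the same route as the paper: part~(1) is identical, part~(3) is the same induction on part~(2), and in part~(2) both you and the paper pass to a common multiple $d'$ of the power indices, use that each $H_k$ is a subgroup (so $\phi_m(H_k)\subset H_k$, giving $X_{d,k}\subset X_{dm,k}$), and invoke Lemma~\ref{lm:Gk-d} to absorb $G_i$.

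One phrasing to correct: the step ``$\phi_{d'}(G_i)$ lies in a single $H_{k'}$'' does \emph{not} follow from quasi-compactness of $G_i$---the $H_k$ form an increasing chain of closed subsets, not an open cover, so no finite-subcover argument applies. The actual reason, which you correctly identify at the end, is the uncountable-field argument: $\phi_{d'}(G_i)$ sits inside some filtration piece $G_j$, the closed subvariety $H\cap G_j$ is exhausted by the increasing closed subsets $H_k\cap G_j$, and over an uncountable field this forces $H\cap G_j=H_{k'}\cap G_j$ for some $k'$. The paper encodes exactly this passage via the index map $\eta_{\iota^{-1}}$ of the identity between the two filtrations $\bigcup_k H_k$ and $\bigcup_j (H\cap G_j)$ on $H$.
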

\begin{proof}
The map $\phi_d$ is a morphism of ind-varieties, so
the first statement follows from $X_{d,k}=\phi_d^{-1}(H_k)$.

To prove \eqref{X_dkG_i}, choose a positive integer $c$ such that $\phi_{c}(G_i)\subset H$ by Lemma~\ref{lm:Gk-d} and take $d'=cd$. 
We also take the identity map  $\iota\colon \bigcup_iH_i\to \bigcup_iG\cap H_i$ between filtrations on $H$, as well as the reverse one,  and consider the corresponding index maps  $\eta_\iota$ and $\eta_{\iota^{-1}}$.
The latter exists, since for any $i$ the subset $H\cap G_i$ is exhausted by closed subsets  $H_k\cap G_i$, hence coincides with $H_k\cap G_i$ for some $k$.

Then
\[\phi_{d'}(G_i)=
\phi_{d}\circ\phi_{c}(G_i)\subset
\phi_{d}(H\cap G_{\eta_{c}(i)})\subset 
H\cap G_{\eta_{d}(\eta_{c}(i))}\subset H_{\eta_{\iota^{-1}}(\eta_{d}(\eta_{c}(i)))}.\]
So, if $k'\ge \eta_{\iota^{-1}}(\eta_{d}((\eta_{c}(i)))$, then $G_i\subset X_{d',k'}$.

Analogously, 
\[
\phi_{d'}(X_{d,k})\subset \phi_c(H_k)\subset H_k,
\]
so $X_{d,k}\subset X_{d',k'}$ for $k'\ge k$.    

So, \eqref{X_dkG_i} holds for $d'=cd$ and a sufficiently large $k'$.
Now \eqref{X_dkG_i-1} follows iteratively.
\end{proof}

\begin{proof}[Proof of Theorem \ref{th:main-ind}]
Let \[
\phi_{d,k}\colon X_{d,k} \to H_k, \; \; g \mapsto g^d
\]
be the restriction of
$\phi_d\colon G\to G,\ g\mapsto g^d$.
 Its differential map at the identity,
\[
\mathrm{d}(\phi_{d,k})_{\id}\colon T_{\id}X_{d,k} \to T_{\id}H_k,
\]
is
given by $x \mapsto d\cdot x$ due to Corollary~\ref{cor:x-dx}. This map has trivial kernel and is surjective due to $H_k\subset X_{d,k}$. So,  $\dim T_{\id}X_{d,k} = \dim T_{\id} H_k=\dim H_k$, since $H_k$ is smooth at the identity. 

Let $Y$ be the union of the irreducible components of subsets $X_{d,k}\cap G_i$, $i\in\NN$, containing the identity. 
From $H_k\subset Y$ and $\dim T_{\id}Y=\dim H_k$ we infer that $Y=H_k$. Indeed, otherwise $Y$ contains an irreducible algebraic subset $S$ which contains the identity and is not contained in $H_k$. 
Then $\dim T_{\id}Y\ge\dim T_{\id}(S\cup H_k)>\dim H_k$.
Thus, the set $X_{d,k}$ contains $H_k$ as an irreducible component, and other components do not contain the identity.

By Lemma~\ref{lm:A-Xdk}, for any $i\in\NN$ there exist $d,k\in\NN$ such that $G_i\subset X_{d,k}$.
Since $G_i$ is irreducible and contains the identity, 
$G_i$ is a subset of the only irreducible component of $X_{d,k}$ which contains the identity, namely, $H_k$.
We conclude that $G\subseteq H.$
\end{proof}

\section{Neutral component with only algebraic
elements}\label{sectionneutralcomponent}
In this section we assume that $\Aut^\circ(X)$ consists of algebraic elements.
By \cite[Main Theorem]{PR}, $\UU$ is an abelian unipotent ind-group (which is trivial, one-dimensional, or infinite-dimensional), 
and the subgroup $\Aut_{\alg}(X)$ generated by connected algebraic subgroups equals $\TT\ltimes \UU$, 
where $\TT$ is a maximal algebraic torus.

\begin{lemma}\label{lm:gd}
For any algebraic element $g\in \Aut^\circ(X)$ there exists a positive integer $d$ such that $g^d\in \TT\ltimes \UU$.
\end{lemma}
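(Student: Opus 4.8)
The plan is to reduce everything to the identity component of an algebraic group containing $g$, whose finite index supplies the required power $d$.

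Since $g$ is algebraic, by definition there is an algebraic subgroup $G\subset\Aut(X)$ with $g\in G$. Being closed and contained in some filtration piece $G_i$, the group $G$ is an algebraic group in the usual sense, so it has finitely many connected components and its identity component $G^\circ$ is a closed normal subgroup of finite index $d=[G:G^\circ]$. As $G^\circ$ is closed in $G$ and $G$ sits inside $G_i$, the subgroup $G^\circ$ is again an algebraic subgroup of $\Aut(X)$, and it is connected.

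First I would observe that the image of $g$ in the finite group $G/G^\circ$ has order dividing $d$, so that $g^d\in G^\circ$. Next, since $G^\circ$ is a connected algebraic subgroup, it is contained in $\Aut_{\alg}(X)$ by the very definition of the latter as the subgroup generated by all connected algebraic subgroups. Finally, the standing assumption that $\Aut^\circ(X)$ consists of algebraic elements applies to $\Aut_{\alg}(X)$ as well, because every connected algebraic subgroup lies in $\Aut^\circ(X)$ and hence $\Aut_{\alg}(X)\subset\Aut^\circ(X)$; thus \cite[Main Theorem]{PR} yields $\Aut_{\alg}(X)=\TT\ltimes\UU$. Combining these facts gives $g^d\in G^\circ\subset\Aut_{\alg}(X)=\TT\ltimes\UU$, which is exactly the claim.

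I do not expect a genuine obstacle here: the statement is essentially a bookkeeping consequence of the structure theorem recalled at the beginning of the section. The only points requiring a little care are the verification that $G^\circ$ is honestly a connected algebraic subgroup in the ind-group sense, so that it is captured by the definition of $\Aut_{\alg}(X)$, and the remark that the hypothesis on $\Aut^\circ(X)$ indeed licenses the identification $\Aut_{\alg}(X)=\TT\ltimes\UU$ borrowed from \cite{PR}.
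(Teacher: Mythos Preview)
Your argument is correct and follows essentially the same route as the paper: pick an algebraic subgroup $G$ containing $g$, use that $G^\circ$ has finite index $d$ so $g^d\in G^\circ$, and invoke $G^\circ\subset\Aut_{\alg}(X)=\TT\ltimes\UU$. The only cosmetic difference is that the paper takes for $G$ the Zariski closure of $\{g^n\mid n\in\ZZ\}$, but the abelianness this provides is never used, so your choice of an arbitrary algebraic $G\ni g$ works equally well.
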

\begin{proof}
The Zariski closure of $\{g^n\mid n\in\ZZ\}$ is an abelian algebraic group, which we denote by $G$. 
The subgroup $G^\circ$ is of finite index in $G$, so we may denote $d=|G/G^\circ|$ and we have $g^d\in G^\circ$.  
Since $G^\circ$ is a connected algebraic group, $G^\circ\subset \TT\ltimes \UU$.
The claim follows.
\end{proof}

\begin{remark}
By \cite[Theorem 1.1]{Br}, for any algebraic group $G$ there is a finite subgroup $H \subset G$ such that $G = H \cdot G^\circ$.
Thus, any algebraic element of $\Aut(X)$ is a product of an element of $\Aut_{\alg}(X)$ and a finite order one.
\end{remark}

As we have mentioned above, 
$\UU$
is a countable increasing union of closed unipotent algebraic subgroups $\UU_k$ of $\Aut(X)$, where $k \ge 0$.
We set $\UU_k=\UU$ for each $k$ if $\UU$ is itself an algebraic group.

\begin{proof}[Proof of Theorem \ref{main}]
Assume that all elements of $\Aut^\circ(X)$ are algebraic.  By 
\cite[Theorem~1.3]{PR}, $\Aut_{\text{alg}}(X) \subset \Aut(X)$ is a closed subgroup that equals $\TT \ltimes \UU$. 
By Lemma~\ref{lm:gd}, we may apply Theorem~\ref{th:main-ind}
to $G=\Aut^\circ(X)$ and $H=\TT\ltimes\UU$  and conclude that $\Aut^\circ(X) = \TT \ltimes \UU$. 
This proves the implication $(1) \Rightarrow (3)$. 
The implications  $(3) \Rightarrow (2) \Rightarrow (1)$  are obvious. 
\end{proof}

\begin{corollary}
Let $X$ be an affine algebraic variety without $\G_a$-actions such that $\Aut^\circ(X)$ consists of algebraic elements.
Then $\Aut^\circ(X)$ is an algebraic torus of dimension at most $\dim X$.
\end{corollary}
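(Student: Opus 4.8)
The plan is to first pin down $\Aut^\circ(X)$ exactly using the theorem already established, and then bound its dimension by a standard orbit argument. First I would feed the hypotheses into Theorem~\ref{main}. Since $X$ admits no $\G_a$-actions and $\UU$ is by definition generated by the $\G_a$-actions on $X$, the group $\UU$ is trivial. As all elements of $\Aut^\circ(X)$ are algebraic by assumption, condition~(1) of Theorem~\ref{main} holds, so condition~(3) gives $\Aut^\circ(X)=\TT\ltimes\UU=\TT$, a maximal subtorus of $\Aut(X)$. In particular $\Aut^\circ(X)$ is an algebraic torus, and it remains only to prove the inequality $\dim\TT\le\dim X$.

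For this I would exploit that $\TT$ acts faithfully on the irreducible affine variety $X$, the action of $\Aut(X)$ on $X$ being faithful by definition. Let $M$ be the character lattice of $\TT$ and write $\K[X]=\bigoplus_{\chi\in M}\K[X]_\chi$ for the weight decomposition of the coordinate ring. An element $t\in\TT$ acts trivially on $X$ if and only if it acts trivially on $\K[X]$, that is, iff $\chi(t)=1$ for every weight $\chi$ occurring in $\K[X]$; since the $\TT$-action is faithful, these weights must span $M\otimes\QQ$, for otherwise some one-parameter subgroup of $\TT$ would act trivially. I would then choose weights $\chi_1,\dots,\chi_r$ (with $r=\dim\TT$) forming a $\QQ$-basis of $M\otimes\QQ$, pick nonzero $f_i\in\K[X]_{\chi_i}$, and, using irreducibility of $X$, note that the open set $U=\{f_1\cdots f_r\ne 0\}$ is nonempty. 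For $x\in U$ the stabilizer $\TT_x$ is contained in $\{t\mid \chi_i(t)=1,\ i=1,\dots,r\}$, which is finite because the $\chi_i$ are linearly independent; hence the orbit $\TT\cdot x$ has dimension $r$, and since it is a subvariety of $X$ this yields $r\le\dim X$, as desired.

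The reduction in the first step is immediate from Theorem~\ref{main}, so the only genuine content is the dimension bound, namely the classical fact that a faithful torus action on an irreducible variety has rank at most the dimension. The step I would treat most carefully is the finiteness of the generic stabilizer, handled by the weight argument above. I would also stress that the irreducibility of $X$, built into the notion of a variety here, is essential: for the reducible example $X=\K^*\sqcup\K^*$ one has $\Aut^\circ(X)=(\K^*)^2$, a torus of dimension $2>1=\dim X$, with no $\G_a$-actions and all elements algebraic, so the statement fails verbatim without irreducibility.
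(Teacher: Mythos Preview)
Your argument is correct. The first step, reducing to $\Aut^\circ(X)=\TT$ via Theorem~\ref{main} with $\UU$ trivial, is exactly what the paper does (the paper's citation of Theorem~\ref{th:main-ind} is just the engine behind Theorem~\ref{main}). For the dimension bound $\dim\TT\le\dim X$, the paper simply invokes \cite[Proposition~4.1(a)]{KPZ1}, whereas you supply a self-contained weight argument: the occurring weights span $M\otimes\QQ$ by faithfulness, a choice of $r$ independent weights yields semi-invariants whose common nonvanishing locus is nonempty by irreducibility, and any point there has finite stabilizer, forcing an $r$-dimensional orbit inside $X$. This is the standard elementary proof of the fact the paper quotes, so the two routes agree in spirit; yours has the advantage of being explicit and of making visible the role of irreducibility, which you correctly flag with the $\K^*\sqcup\K^*$ counterexample.
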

\begin{proof}
Follows from Theorem~\ref{th:main-ind} and \cite[Proposition 4.1(a)]{KPZ1}.
\end{proof}

\section{The automorphism group of a rigid variety}\label{sectionautomorphismsofrigidvarieties}

In this section  we do not assume that $\Aut^\circ(X)$ consists of algebraic elements.
Assume that an affine variety $X$ is rigid, i.e., admits no $\G_a$-actions. Since $\U(X)$ is trivial,  \cite[Theorem 1.3]{PR} shows that $\Aut_{\alg}(X)=\TT$ is an algebraic torus. 

\begin{proposition}
The center of $\Aut^\circ(X)$ contains $\TT$.
\end{proposition}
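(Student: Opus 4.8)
The plan is to show that every element of $\Aut^\circ(X)$ first normalizes and then centralizes $\TT$. Note that $\TT$, being a connected algebraic subgroup, lies in $\Aut^\circ(X)$, so the statement makes sense; what must be checked is that for every $g\in\Aut^\circ(X)$ and every $t\in\TT$ one has $gtg^{-1}=t$. Throughout I fix an admissible filtration $\Aut^\circ(X)=\bigcup_i G_i$ by irreducible (hence connected) subvarieties, each containing the identity, which is available since $\Aut^\circ(X)$ is connected.

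First I would prove that $\Aut^\circ(X)$ normalizes $\TT$. For $g\in\Aut^\circ(X)$ the conjugation $c_g\colon\Aut(X)\to\Aut(X)$, $h\mapsto ghg^{-1}$, is an automorphism of ind-groups with inverse $c_{g^{-1}}$. The image $c_g(\TT)$ is therefore closed in $\Aut(X)$, being equal to $c_{g^{-1}}^{-1}(\TT)$ with $\TT$ closed; moreover $c_g$ maps the filtration piece containing $\TT$ into some $G_k$, so $c_g(\TT)$ is a closed subgroup contained in an algebraic variety, i.e. an algebraic subgroup. Since $c_g$ restricts to an isomorphism of algebraic groups $\TT\to c_g(\TT)$, the group $c_g(\TT)$ is a torus, in particular a connected algebraic subgroup, hence contained in $\Aut_{\alg}(X)=\TT$. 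As $c_g(\TT)\subseteq\TT$ is a subtorus of the same dimension, we conclude $g\TT g^{-1}=\TT$. Thus $\Aut^\circ(X)$ lies in the normalizer of $\TT$, and the conjugation action yields a homomorphism $\rho\colon\Aut^\circ(X)\to\Aut_{\mathrm{gp}}(\TT)\cong GL_n(\ZZ)$, where $n=\dim\TT$.

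It remains to show that $\rho$ is trivial, and here I would exploit that its target is discrete together with the connectedness of each $G_i$. Fix a character $\chi\in X^*(\TT)$. The map $G_i\times\TT\to\TT$, $(h,t)\mapsto hth^{-1}$, is a morphism of varieties, so $F(h,t)=\chi(hth^{-1})$ is a regular function on $G_i\times\TT$. Expanding in the character basis of $\cO(\TT)$ we write $F=\sum_{\psi\in X^*(\TT)}f_\psi\otimes\psi$ with finitely many nonzero $f_\psi\in\cO(G_i)$; for each fixed $h$ the restriction $F(h,\cdot)$ equals the single character $\rho(h)(\chi)$, so by linear independence of characters exactly one $f_\psi(h)$ equals $1$ and the rest vanish at $h$. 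Consequently the set $Z_\chi=\{h\in G_i:\rho(h)(\chi)=\chi\}=\{h:f_\chi(h)=1\}$ is closed, while its complement $\bigcup_{\psi\neq\chi}\{h:f_\psi(h)=1\}$ is a finite union of closed sets, hence also closed. Thus $Z_\chi$ is clopen and nonempty (it contains the identity), so $Z_\chi=G_i$ by irreducibility. Letting $\chi$ range over a generating set of $X^*(\TT)$ shows $\rho(h)=\id$ for all $h\in G_i$; as this holds for every $i$, the torus $\TT$ is centralized by all of $\Aut^\circ(X)$.

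The main obstacle is the first step: one must verify that conjugating the algebraic torus $\TT$ by an arbitrary, possibly non-algebraic, element $g\in\Aut^\circ(X)$ again produces an algebraic subgroup, so that the identification $\Aut_{\alg}(X)=\TT$ can be invoked. This relies on $c_g$ being an isomorphism of ind-varieties, which is what makes both the closedness of $c_g(\TT)$ and its containment in a single filtration piece follow formally. Once $\Aut^\circ(X)\subseteq N(\TT)$ is secured, the triviality of $\rho$ is the soft connectedness argument above.
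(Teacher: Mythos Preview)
Your proof is correct and follows the paper's two-step strategy: first show that $\TT$ is normal in $\Aut^\circ(X)$ (the paper asserts this in one line, whereas you justify it carefully via $c_g(\TT)\subseteq\Aut_{\alg}(X)=\TT$), then argue that the resulting homomorphism $\rho\colon\Aut^\circ(X)\to\Aut(\TT)\cong\mathrm{GL}(n,\ZZ)$ is trivial. The only substantive difference is in this last step: the paper simply observes that $\mathrm{GL}(n,\ZZ)$ is countable and concludes that the image of the connected ind-group is trivial, implicitly relying on the standing hypothesis that $\K$ is uncountable (so each irreducible filtration piece $G_i$ cannot split as a nontrivial countable union of proper closed fibres); your clopen argument via the finite character expansion of $F=\chi(hth^{-1})$ reaches the same conclusion directly and, as a bonus, does not need the uncountability of $\K$.
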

\begin{proof}
The torus $\TT$ is a normal closed subgroup in $\Aut^\circ(X)$. 
The action of $\Aut^\circ(X)$ on $\TT$ by conjugations induces the natural homomorphism $\Aut^\circ(X)\to\mathrm{GL}(n,\ZZ)$, since $\Aut\TT\cong\mathrm{GL}(n,\ZZ)$, where $\TT$ is seen as an algebraic group.
Since $\mathrm{GL}(n,\ZZ)$ is countable, the image of $\Aut^\circ(X)$ is trivial. The assertion follows.
\end{proof}

\begin{remark}
 Any maximal abstract abelian subgroup $G$  of $\Aut^\circ(X)$ is an at most countable extension of $\TT$.  Indeed,  $G$ coincides with its centralizer $C(G)$ in $\Aut^\circ(X)$, otherwise an element $h\in C(G)\setminus G$ commutes with each element of $G$, a contradiction with the maximality of $G$. Hence, $G \subset \Aut^\circ(X)$ is a closed ind-subgroup (\cite[Lemma 2.4]{LRU}). 
 Further, $G$  contains $\TT$, 
 and by \cite[Theorem B]{CRX} the connected component $G^\circ$ is algebraic. 
 So, $G^\circ=\TT$. 
 
 In particular, the only maximal connected abelian 
  ind-subgroup  of $\Aut^\circ(X)$ is $\TT$. 
\end{remark}

\begin{question}
Given a rigid affine variety $X$, what can we say about the subset of algebraic elements of $\Aut^\circ(X)$?
\end{question}

\end{document}